\pdfoutput=1
\documentclass[letter]{article}

\usepackage{nicefrac}
\usepackage[numbers]{natbib}
\usepackage{bbm}
\usepackage{comment}
\usepackage[all]{xy}\usepackage[latin1]{inputenc}        
\usepackage[dvips]{graphics,graphicx}
\usepackage{amsfonts,amssymb,amsmath,xcolor,mathrsfs, amstext}
\usepackage{amsbsy, amsopn, amscd, amsxtra, amsthm,authblk, enumerate,dsfont}
\usepackage{upref}
\usepackage{geometry}
\geometry{left=2.54cm,right=2.54cm,top=2.54cm,bottom=2.54cm}
\usepackage{lineno}
\usepackage{float}

\usepackage[colorlinks,
            linkcolor=blue,
            anchorcolor=green,
            citecolor=blue
            ]{hyperref}

\numberwithin{equation}{section}

\newcommand{\wt}{\widetilde}
\def\alb#1\ale{\begin{align*}#1\end{align*}}

\newcommand{\etal}{{\em et al.}\ }

\newcommand{\R}{\mathbb{R}}

\newcommand{\PXY}{\mathcal{P}(\X \times \Y)}

\newcommand{\clPXY}{\overline{\mathcal{P}(\X \times \Y)}}

\newcommand{\clPO}{\overline{\mathcal{P}(\Omega)}}
\newcommand{\X}{\mathcal{X}}
\newcommand{\Y}{\mathcal{Y}}
\newcommand{\XY}{\mathcal{X} \times \mathcal{Y}}

\newcommand{\conv}{\text{conv}}

\newcommand{\KL}{\rm{KL}}
\newcommand{\RKL}{\rm{RKL}}

\newcommand{\PES}{\pi_{\textrm{ES}}}
\newcommand{\MES}{\mu_{\textrm{ES}}}
\newcommand{\NES}{\nu_{\textrm{ES}}}

\newcommand{\PT}{\cal{P}(\cal{T})}
\newcommand{\T}{\cal{T}}

\newtheorem{theorem}{Theorem}[section]
\newtheorem*{theorem*}{Theorem}
\newtheorem{lemma}[theorem]{Lemma}
\newtheorem*{lemma*}{Lemma}
\newtheorem*{definition*}{Definition}
\newtheorem{proposition}[theorem]{Proposition}
\newtheorem*{proposition*}{Proposition}

\newtheorem*{corollary*}{Corollary}
\newtheorem{definition}[theorem]{Definition}
\newtheorem*{definitions*}{Definitions}

\newtheorem{example}{\bf Example}[section]
\newtheorem*{example*}{\bf Example}
\theoremstyle{remark}
\newtheorem{remark}{\bf Remark}[section]

\numberwithin{equation}{section}

\title{Geometry and duality of alternating Markov chains}
\author{Deven Mithal and Lorenzo Orecchia\\ University of Chicago}

\date{}
\begin{document}

\maketitle

\begin{abstract}
In this note, we realize the half-steps of a general class of Markov chains as alternating projections with respect to the reverse Kullback-Leibler divergence between convex sets of joint probability distributions. Using this characterization, we provide a geometric proof of an information-theoretic duality between the Markov chains defined by the even and odd half-steps of the alternating projection scheme.
\end{abstract}

\newpage

\tableofcontents

\newpage
\section{Introduction}
The object of study of this paper is the {\it alternating Markov chain}, which we define to be an irreducible, aperiodic Markov chain over a finite state space $\X$,  whose evolution is given as
$$
\pi_{t+1}(x, y) = \begin{cases}
 P[x|y] \cdot (\pi_{t})_{\Y}(y), & t \in 2 \cdot \mathbb{N}\\
 P[y|x] \cdot (\pi_{t})_{\X}(x), & t \in  2 \cdot \mathbb{N} + 1
\end{cases}
$$
with fixed conditional distributions $P[y|x],P[x|y]$ over a finite product space $\XY$. The resulting process on $\XY$ is a time-inhomogeneous Markov chain, termed the {\it product chain}, in which each coordinate is alternatively resampled according to the conditionals.
We call the original Markov chain defined on $\X$  by the two-step resampling procedure the {\it primal chain}, and that induced on $\Y$ the {\it dual chain}. We refer to either coordinate resampling operation as a {\it half-step} and
and consider the half-step evolution $(\pi_t \in \clPXY)_{t \geq 0}$ of the product Markov chain on $\clPXY$, the space of probability distributions over the product space $\XY$ with not necessarily full support. Additionally, we denote by $\PXY$ the space of probability distributions over $\XY$ with full support. 

A well-known example of this construction is the Swendsen-Wang dynamics for the Potts model \cite{grimmett2006random}, where the product space consists of the product of Potts-model assignments and subsets of edges in a percolation model, and the conditional distributions are dictated by the so-called Edwards-Sokal coupling. Generalizing this example, we assume that the input conditional distributions are compatible in the sense that there exists a measure $\pi \in \clPXY$
which satisfies the disintegrations
\begin{equation}\label{eq-disint-generic-x}
    \pi(A \times B) = \int_{B}P[A| x] \pi_{\X}(dx)
\end{equation}
\begin{equation}\label{eq-disint-generic-y}
    \pi(A \times B) = \int_{A}P[B| y] \pi_{\Y}(dy)
\end{equation}
where $\pi_\X$ and $\pi_\Y$ indicate the marginals of $\pi$ with respect to $\X$ and $\Y$ respectively. From the ergodicity of the primal chain, it is readily verified that such a coupling $\pi$  must be the unique stationary distribution for the product Markov chain and possess the stationary distribution on the primal (resp. dual) chain as its $\X$ (resp. $\Y$) marginal, and we make the following definition.

\begin{definition}[Generalised Edwards-Sokal Coupling]\label{def-general-edwards-sokal}
    Given an alternating Markov chain, as described above, the unique probability measure $\pi \in \clPXY$ which satisfies the disintegrations in Equations~\ref{eq-disint-generic-x}-\ref{eq-disint-generic-y} is called the generalized Edwards-Sokal (ES) measure, which we denote as $\PES \in \clPXY$.
\end{definition}
Note that by the ergodicity assumption, $(\pi_{ES})_{\X}$ has full support over $\X$, and we can assume that $(\pi_{ES})_{\Y}$ also has full support over $\Y$ by pruning $\XY$ as needed.

\begin{remark}[Factorizable Markov chains]
    The proposed construction of alternating Markov chains is closely related to the notion of a \textit{factorizable} Markov chain by Caputo~\etal~\cite{caputo2024entropycontractionsmarkovchains}. In particular, it can be shown that a Markov chain $M$ is a primal Markov chain in an alternating Markov chain if and only if $M$ is factorizable in the sense of \cite{caputo2024entropycontractionsmarkovchains} and ergodic, modulo a measure zero modification to the $x|y$-conditional.
\end{remark}

 Caputo \etal~\cite{caputo2024entropycontractionsmarkovchains} show that all such primal chains are reversible, but otherwise few general results are known about the properties of alternating Markov chains, particularly as it regards their convergence.

In this paper, we develop a novel geometric understanding of alternating Markov chains, based on alternating projections, which we use to give an alternative proof for an information-theoretic relationship between the entropy decay of the primal and dual chains. In particular, we study the induced geometry on the space of log-likelihoods, and relate the Kullback-Leibler divergence to a Bregman divergence which naturally represents this induced geometry, hereby compensating for the failure of the reverse Kullback-Leibler divergence to be Bregman.
 
 This duality property is an easy consequence of the chain rule for the $\KL$-divergence, but the geometric approach we present in this paper draws a close analogy between alternating Markov chains and the Sinkhorn algorithm from entropically regularized optimal transport, which itself is realized in the primal formulation as an alternating projection scheme with respect to the $\KL$-divergence. We believe this reinforces the ubiquity of the phenomenon of alternating projections with divergences of the space of probability distributions, and contributes to the overall story of divergence geometry over probability measures.

 \begin{remark}[Information geometry proof]
     It should be noted that the proof of duality in this paper, executed using Bregman divergence machinery, can be done using information geometry tools. This involves studying the appropriate convex sets as submanifolds of the probability simplex, but ultimately culminates in the application of an analogous Pythagorean theorem, and shares a similar geometric interpretation.
 \end{remark}

 While a comprehensive review of related literature is beyond the scope of this paper, we give some idea of what the ambient field looks like.

 The study of the rate at which a Markov chain converges in $\KL$-divergence, which is motivated by Pinsker's inequality, is an extensive field which has powerful implications for the mixing time and related quantities regarding convergence, and a reference for the general theory of entropy decay of Markov chains can be found in \cite{caputo2023lecture}.

 The idea that many interesting Markov chains somehow admit as their natural unit of evolution an intermediary half-step and that the state of the chain at these half-steps can prove to have powerful consequences of the chain viewed at its typical full-step granularity is an established one. Referenced above, ~\cite{caputo2024entropycontractionsmarkovchains} reconciles several notions of entropy decay at half-step and full-step levels, as well as the analogues of these notions in the continuum.

 Our motivating example of the Swendsen-Wang dynamics was initially introduced in \cite{SwendsenWang87}, and has been the subject of considerable inquiry, with mixing bounds typically fall into one of two camps: comparison of the Swendsen-Wang dynamics with the Glauber dynamics or some other local dynamic, or an intrinsic study of the entropy decay properties of the Swendsen-Wang dynamics. The former line of work essentially begins with \cite{ullrich2012rapid}, \cite{ullrich2013comparison}, \cite{Ullrich_2014} and \cite{Ullrich_2014_Thesis}, with contemporary examples of this work including \cite{gheissari2024spatial} which studies the closely related FK dynamics. However, due to the technical compromise of obscuring the Swendsen-Wang dynamic itself, such bounds are often loose. The intrinsic entropy decay literature includes \cite{blanca2021entropydecayswendsenwangdynamics}, \cite{blanca2021swendsenwangdynamicstrees}, \cite{blanca2023rapidmixingglobalmarkov} and \cite{blanca2023tractabilitysamplingpottsmodel}. From this perspective, Theorem~\ref{thm-duality} is of interest as it enables one to transfer entropy decay properties from the primal chain to the dual chain (see Remark~\ref{remark-entropy-transfer}), and this technique is illustrated by Section 8 of \cite{blanca2021entropydecayswendsenwangdynamics} as well as a similar result in \cite{ullrich2012rapid}.

There is also a literature connecting the two component Gibbs sampler/Glauber dynamics with alternating projection methods, but this work is rather different in flavor and implication(\cite{greenwood1998information}, \cite{diaconis2010stochastic}, \cite{qian2024gibbs}).

The Sinkhorn algorithm is a procedure from the theory of Entropically Regularized Optimal Transport~\cite{nutz2021introduction}, and its alternating projection structure immediately admits an analogous analysis to that conducted in this paper by virtue of the $\KL$-divergence being Bregman~\cite{cesa2006prediction}. A recent paper \cite{deb2023wasserstein} contains the definition of a Markov chain which follows an alternating conditional sampling procedure similar to that which we consider in this paper, but with the marginals being defined by the joint distribution in the dynamic at each step rather than a fixed joint measure which uniquely realizes those conditionals.

The study of the geometry of spaces of probability measures equipped a divergence as the notion of distance is rich and has foundations in the work of Csisz\'ar, some of which can be found in \cite{csiszar1967information}, \cite{csiszar1975divergence}, \cite{csiszar1984information} and \cite{csiszar2004information}. 

Lastly, an offshoot of the general field of information theory which represents the above geometric perspective is information geometry, which broadly studies the differential structure of manifolds consisting of probability distributions. Textbook treatments are given in \cite{amari2000methods} and \cite{amari2016information}, with \cite{Nielsen_2020} providing a modern, elementary survey, and \cite{hino2024geometry} providing an overview of the application of information geometry to the realization of the EM algorithm as a sequence of alternating projections.

\subsection{Alternating Markov chains}
Before we can state our main results, we need to briefly introduce some fundamental notions regarding the support of alternating Markov chains.

\begin{definition}
    For an alternating Markov chain, we let $\cal{T} \subseteq \XY$ equal the support of $\PES.$
\end{definition}

Because we will measure the distance from stationarity of the distribution $\pi_t$ by the KL-divergence $D_{\KL}(\pi, \PES)$, it will be necessary to guarantee  that $\pi_t$ and $\PES$ are mutually absolutely continuous for large enough $t$. To this end, we introduce the following definition.

\begin{definition}\label{def-burnin}
We let the {\it burn-in time} $t_0$ of the product chain be defined as the first half-step \textit{after} which $\pi_{t_0}$ and $\PES$ are mutually absolutely continuous, i.e.\ $\operatorname{supp}(\pi_{t_0}) = \cal{T},$ for all choices of initial distribution $\pi_0$.
\end{definition}
The following lemma is a simple consequence of the ergodicity of the primal chain.
\begin{lemma}\label{lem-finite-burn-in}
The burn-in time of the product chain in an alternating Markov chain is finite. For all $t \geq t_0,$ we have $\operatorname{supp}(\pi_t) = \cal{T}$. 
\end{lemma}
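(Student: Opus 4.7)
The plan is to combine the ergodicity of the primal chain on $\X$ with an explicit description of $\T$ in terms of the conditional distributions $P[y|x]$ and $P[x|y]$.

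First I would unpack $\T$. Restricting the disintegration in Equation~\ref{eq-disint-generic-x} to singletons yields $\PES(\{(x,y)\}) = P[y|x]\,(\PES)_\X(x)$, and similarly Equation~\ref{eq-disint-generic-y} gives $\PES(\{(x,y)\}) = P[x|y]\,(\PES)_\Y(y)$. Since both marginals of $\PES$ are assumed to have full support (after the pruning described in the paragraph following Definition~\ref{def-general-edwards-sokal}), this gives the two characterizations
\[
    \T \;=\; \{(x,y) \in \XY : P[y|x] > 0\} \;=\; \{(x,y) \in \XY : P[x|y] > 0\}.
\]

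Next I would analyze one half-step. When $t$ is odd, the definition of the product chain gives $\pi_{t+1}(x,y) = P[y|x](\pi_t)_\X(x)$, so $\operatorname{supp}(\pi_{t+1}) = \{(x,y) : P[y|x]>0,\ (\pi_t)_\X(x)>0\}$. Combined with the above characterization, $\operatorname{supp}(\pi_{t+1}) = \T$ holds precisely when $(\pi_t)_\X$ has full support on $\X$. Symmetrically, when $t$ is even, $\operatorname{supp}(\pi_{t+1}) = \T$ iff $(\pi_t)_\Y$ has full support on $\Y$.

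To obtain finite burn-in, I would appeal to ergodicity. The odd half-step marginals $(\pi_1)_\X, (\pi_3)_\X, (\pi_5)_\X, \dots$ are exactly the iterates of the primal chain on $\X$ starting from $(\pi_1)_\X$, since resampling $y|x$ leaves the $\X$-marginal unchanged. Because the primal chain is irreducible and aperiodic on the finite set $\X$, there is a finite $N$ such that its $n$-step transition kernel is strictly positive for all $n \geq N$, uniformly in the starting state and hence uniformly in the starting distribution. Setting $t_0 := 2N+2$, the marginal $(\pi_{t_0-1})_\X$ has full support for every initial $\pi_0$, and the half-step analysis above yields $\operatorname{supp}(\pi_{t_0}) = \T$.

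Finally, for persistence I would induct on $t \geq t_0$. If $\operatorname{supp}(\pi_t) = \T$, then both marginals of $\pi_t$ have full support (because the marginals of $\PES$ do), so applying the one-half-step analysis in either parity gives $\operatorname{supp}(\pi_{t+1}) = \T$. No step looks like a serious obstacle; the only mildly delicate point is tracking the parity of $t$ carefully to use the correct resampling formula and to identify the primal chain with the odd-indexed $\X$-marginals.
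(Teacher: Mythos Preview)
Your proposal is correct and follows exactly the route the paper indicates: the paper omits the proof entirely, noting only that the lemma ``is a simple consequence of the ergodicity of the primal chain,'' and your argument spells this out by characterizing $\T$ via the conditionals, invoking the eventual positivity of the primal transition kernel to get full $\X$-support at some odd time, and then inducting for persistence. The one cosmetic point is that your displayed $t_0 := 2N+2$ is an upper bound on the burn-in time as defined in Definition~\ref{def-burnin} (the \emph{first} time at which supports match for all initializations) rather than the burn-in time itself, but this is precisely what is needed for finiteness.
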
 
A common setting arises when the primal Markov chain has strictly positive probability transition among all pairs of states. In this case, the burn-in time is $3$ half-steps. In general, when studying the asymptotics of the mixing time of a family of Markov chains in some system size $n$, one must be wary of any the dependence of the burn-in on $n$.

The definition of burn-in time
allows for $t \geq t_0$ to identify $\pi_t$ as a probability distribution over $\cal{T}$ with full support, i.e.\ an element of $\PT$, and so for the rest of the paper we will only consider $t \geq t_0$ and restrict our attention $\PT$.

\subsection{Results and applications}
The core observation of this paper is to realize the sequence of half-steps of the alternating Markov chain as a projection scheme in the space of probability measures $\PT$ onto the intersection of appropriate sets with respect to a divergence, i.e.\ as an instantiation of von Neumann's alternating projection method. Surprisingly, the correct divergence to use is the reverse of the standard $\KL$-divergence, i.e.\ for all
 $p,q \in \PT$ we have
$$
D_{\RKL}(p,q) := D_{\KL} (q,p)
$$
It is taken to be $+\infty$ if $q$ fails to be absolutely continuous with respect to $p.$
The appropriate sets over which this $\RKL$ projection takes places are exactly the sets of distributions with one matching conditional:
\begin{align*}
    S_1 &:= \{\pi \in \PT | \, \pi \text{ satisfies Equation \ref{eq-disint-generic-x}} \} \\
    S_2 &:= \{\pi \in \PT | \, \pi \text{ satisfies Equation \ref{eq-disint-generic-y}} \}
\end{align*}
for which $\{ \PES \} = S_1 \cap S_2$. 
The following fact is a simple exercise in measure theory, so we omit it.
\begin{lemma}\label{lem-convexity}
 The sets $S_1$ and $S_2$ are convex.   
\end{lemma}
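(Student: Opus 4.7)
The plan is to reduce convexity of $S_1$ and $S_2$ to the observation that each set is cut out of $\PT$ by a system of linear equations in $\pi$. I focus on $S_1$; the argument for $S_2$ is symmetric, obtained by swapping the roles of $\X$ and $\Y$.

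First, I would unpack the disintegration in Equation~\ref{eq-disint-generic-x} on the finite product space $\XY$ into its pointwise equivalent
\[
\pi(x,y) \;=\; P[y \mid x]\,\pi_{\X}(x) \;=\; P[y \mid x]\sum_{y' \in \Y}\pi(x,y'), \qquad (x,y) \in \mathcal{T}.
\]
Since the marginal $\pi_{\X}(x) = \sum_{y'} \pi(x,y')$ is a linear functional of $\pi$, each of these identities is a linear equation in the entries of $\pi$. The solution locus is therefore an affine subspace $L \subseteq \R^{\mathcal{T}}$, and we may write $S_1 = \PT \cap L$. Because $\PT$ is itself convex (convex combinations of strictly positive probability vectors are still strictly positive probability vectors), the intersection $S_1$ is convex.

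Equivalently, I could run a completely direct check: given $\pi, \pi' \in S_1$ and $\lambda \in [0,1]$, the combination $\pi_\lambda := \lambda \pi + (1-\lambda)\pi'$ has marginal $(\pi_\lambda)_{\X} = \lambda\,\pi_{\X} + (1-\lambda)\,\pi'_{\X}$ by linearity of marginalization, and substituting this identity into the pointwise relation above shows that $\pi_\lambda$ again satisfies Equation~\ref{eq-disint-generic-x}. The only point of care is that $\pi_\lambda$ remains in $\PT$, i.e.\ has full support on $\mathcal{T}$ and unit total mass; both are immediate from strict positivity and mass-preservation under convex combinations with nonnegative coefficients summing to one. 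There is no real obstacle here, and the identical argument applied to Equation~\ref{eq-disint-generic-y} gives the convexity of $S_2$.
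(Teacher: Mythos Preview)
Your argument is correct: the disintegration condition, once unpacked pointwise on the finite set $\mathcal{T}$, is a system of linear equations in the entries of $\pi$ (since marginalization is linear), so $S_1$ is the intersection of an affine subspace with the convex set $\PT$, hence convex. The direct check via $\pi_\lambda$ is equally valid.

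The paper does not actually give a proof of this lemma; it is explicitly omitted as ``a simple exercise in measure theory.'' So there is nothing to compare against at this point in the text. That said, later in the proof of Lemma~\ref{lem-doubly-autoparallel} the paper represents $S_1$ as $\conv(\{\mu_x\}_{x \in \X})$ for explicit vertices $\mu_x$, which retroactively gives another route to convexity. Your linear-equation description and the paper's convex-hull description are dual ways of cutting out the same set and are equally straightforward here.
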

To state our main structural lemma, we need to introduce a new definition.

\begin{definition}[Denormalization/log-denormalization (modification of \cite{ohara2017doublyautoparallelstructureprobability}, Definition 2)]\label{def-denormalization}
     Given a convex subset $S$ of $\PT$, we define the set $\wt S$ in $\R_{+}^{\T}$ by
     \begin{align*}
         \wt S := \{ \tau \cdot p| \, \tau \in \R_+, \, p \in S\}
     \end{align*}
     which we refer to as the denormalization of $S$. We then define the image of the set $\wt S$ under the coordinate-wise logarithm as the logarithmic denormalization, and denote this by $\log \wt S$.
\end{definition}

We are now ready to state our structural result on the characterization of the half-steps of the product chain as alternating projections, which is proven in Section~\ref{sec-geometric-proof}.

\begin{theorem}\label{thm-alt-proj}
    The half-steps in the probability space $\PT$ of the alternating Markov chain $(\pi_{t})_{t \geq t_0}$ are given by alternating projections onto the sets $S_1$ and $S_2$ with respect to the $\RKL$-divergence. 
    Equivalently, their log-likelihoods $(\ell_{t})_{t \geq t_0}$ are given by alternating projections onto the sets $\log \tilde{S}_1$ and $\log \tilde{S}_2$.
    That is, the half-steps satisfy for $t \geq  t_0$:
    \begin{align}
        \pi_{t + 1} &= \arg \min_{\pi \in S_1} D_{\RKL}(\pi, \pi_{t})  & \textrm{  and  }  & & \ell_{t + 1} &= \arg \min_{\ell \in \log \tilde{S}_1} B_{H^*}(\ell, \ell_{t}) \;\textrm{ for } t \in 2\mathbb{N}\\
        \pi_{t + 1} &= \arg \min_{\pi \in S_2} D_{\RKL}(\pi, \pi_{t}) & \textrm{  and  }  & & \ell_{t + 1} &= \arg \min_{\ell \in \log \tilde{S}_2} B_{H^*}(\ell, \ell_{t}) \; \textrm{ for } t \in 2\mathbb{N}+1
    \end{align}
    where $B_{H*}$ represents some Bregman divergence.
\end{theorem}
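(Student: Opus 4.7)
The plan is to verify the $\RKL$-projection statement directly in $\PT$ using a convenient parametrization of $S_i$ by its free marginal, and then to transport the statement to $\log \tilde S_i$ by an explicit choice of Bregman generator $H^*$ whose divergence reduces on normalized distributions to $D_{\RKL}$.

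For the even step $t \in 2\mathbb{N}$, I would parametrize $S_1$ by noting that every $\pi \in S_1$ has the form $\pi(x,y) = P[x|y]\, q(y)$ for a unique $q \in \PY$ (namely $q = \pi_\Y$). Expanding $D_{\RKL}(\pi,\pi_t) = D_{\KL}(\pi_t,\pi)$ and peeling off the terms independent of $q$, the only $q$-dependent piece is the cross-entropy $-\sum_y (\pi_t)_\Y(y)\log q(y)$, which by Gibbs' inequality is minimized uniquely at $q = (\pi_t)_\Y$. Substituting back recovers the half-step $\pi_{t+1}(x,y) = P[x|y](\pi_t)_\Y(y)$; the odd case is symmetric via Equation~\ref{eq-disint-generic-y}. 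Lemma~\ref{lem-convexity} guarantees that the minimizer exists and is unique.

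For the log-space statement, I would take $H^*(\ell) := \sum_{\tau \in \T} e^{\ell(\tau)}$, the denormalized log-partition function on $\R^\T$, so that $\nabla H^*(\ell) = e^\ell$ coordinate-wise. Writing an arbitrary element of $\log \tilde S_1$ as $\ell = \log(\tau_0\, \pi)$ for some $\tau_0 > 0$ and $\pi \in S_1$, and $\ell_t = \log \pi_t$ with $\pi_t$ already normalized, a direct substitution into the definition of $B_{H^*}$ yields
\begin{align*}
B_{H^*}(\ell, \ell_t) \;=\; (\tau_0 - 1 - \log \tau_0) \;+\; D_{\RKL}(\pi, \pi_t).
\end{align*}
The first summand is non-negative and vanishes uniquely at $\tau_0 = 1$, so the joint minimization decouples: the optimal scaling is $\tau_0^\ast = 1$, and the residual minimization over $\pi \in S_1$ is exactly the $\RKL$-projection solved above. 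Hence the $B_{H^*}$-projection of $\ell_t$ onto $\log \tilde S_1$ returns $\log \pi_{t+1}$, matching the half-step in log-coordinates.

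The main obstacle is picking the correct Bregman generator $H^*$: it must both recover $D_{\RKL}$ on normalized distributions and produce a clean additive decoupling in the denormalizing scalar $\tau_0$, so that the extra degree of freedom in $\tilde S_i$ is automatically eliminated by the optimization. Once the candidate $H^*(\ell) = \sum_\tau e^{\ell(\tau)}$ is guessed, the verification reduces to two elementary computations; the rest of the theorem is symmetry between the even and odd half-steps.
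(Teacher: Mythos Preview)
Your proposal is correct and follows essentially the same route as the paper: both arguments split the objective into a scalar ``denormalization'' penalty plus an $\RKL$/$\KL$ term on the simplex, then minimize the two pieces independently. The only cosmetic difference is that you expand $B_{H^*}$ directly and invoke Gibbs' inequality, whereas the paper passes through the reverse property $B_{H^*}(\ell,\ell_t)=B_H(\pi_t,\exp\ell)$, Proposition~\ref{prop-deform-KL}, and the chain rule (Proposition~\ref{prop-chain-kl}) to reach the same decomposition $(\tau_0-1-\log\tau_0)+D_{\KL}(\pi_t,\pi)$.
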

 
The restriction based on the burn-in time is also necessary to ensure that there exists a distribution, namely $\pi_{ES}$, in $S_1$ and $S_2$ with respect to which the current $\pi_{t}$ is absolutely continuous. 

The proof of Theorem~\ref{thm-alt-proj} demonstrates an elementary technique for accessing a Pythagorean relation for the $\RKL$ divergence which, since the $\RKL$ divergence is not Bregman, is typically only available using heavier Information Geometry machinery.
   
Next, we leverage the properties of Bregman divergences (see Section~\ref{sec-geometric-proof}) to exploit the variational characterization of Theorem~\ref{thm-alt-proj} to prove the aforementioned duality theorem, i.e.\ in the behavior of the divergences $D_{\RKL}(\PES, \pi_{t})$ for even and odd $t.$
For convenience, we explicitly introduce notation for the primal and dual marginals of the product distribution $(\pi_k)$:
\begin{align}\label{def-marginals}
    \mu_{k} := (\pi_k)_{\X}, \, \nu_{k} := (\pi_k)_{\Y} ,\, \MES := (\PES)_{\X}, \, \NES := (\PES)_{\Y}
\end{align}

With this setup, we are now in a position to state the duality theorem.

\begin{theorem}[Duality]\label{thm-duality}
    For $t \geq t_0$, it holds that:
     \begin{align}\label{eq-pythagorean-brief}
        D_{\RKL}(\PES, \pi_{t}) = D_{\RKL}(\PES, \pi_{t + 1}) + D_{\RKL}(\pi_{t + 1}, \pi_{t})
    \end{align}
    This implies that the sequence of divergences $(D_{\KL}(\pi_t, \pi_{ES}))_{t \geq t_0}$ is monotonically non-increasing. Additionally, the primal and dual Markov chains possess equivalent entropy decay in the sense that for all even $t$ greater than $t_0$,   
    $$
        D_{\RKL}(\mu_{ES}, \mu_{t}) \geq D_{\RKL}(\mu_{ES}, \mu_{t+1}) \geq 
        D_{\RKL}(\nu_{ES}, \nu_{t+1}) \geq
        D_{\RKL}(\nu_{ES}, \nu_{t+2})\geq 
        D_{\RKL}(\mu_{ES}, \mu_{t+2}). 
    $$
\end{theorem}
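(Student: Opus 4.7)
The plan is to extract all three claims from a single Pythagorean-type identity, namely (\ref{eq-pythagorean-brief}), which is the substantive content; the monotonicity statement is then immediate, and the primal/dual chain of inequalities falls out by combining that identity with the chain rule for the $\KL$-divergence applied to the specific conditionals that $\pi_t$ and $\PES$ share.

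For the Pythagorean identity I would work in the log-likelihood coordinates made available by Theorem~\ref{thm-alt-proj}. Fix even $t \geq t_0$, so that $\ell_{t+1}$ is the $B_{H^*}$-projection of $\ell_t$ onto $\log \tilde{S}_1$; since $\PES \in S_1$, the log-likelihood $\ell_{ES}$ also lies in $\log \tilde{S}_1$. Because the conditional constraint defining $S_1$ transports to an affine constraint on the denormalised set $\tilde{S}_1$ (up to a normalisation degree of freedom absorbed by the denormalisation), the generalised Pythagorean theorem for Bregman projections onto affine sets applies, yielding the equality $B_{H^*}(\ell_{ES}, \ell_t) = B_{H^*}(\ell_{ES}, \ell_{t+1}) + B_{H^*}(\ell_{t+1}, \ell_t)$. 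Translating back from $B_{H^*}$ to $D_{\RKL}$ using the equivalence established in Theorem~\ref{thm-alt-proj} gives (\ref{eq-pythagorean-brief}). A parallel argument handles odd $t$ with $\log \tilde{S}_2$.

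Monotonicity of $(D_{\KL}(\pi_t, \PES))_{t \geq t_0}$ then follows from (\ref{eq-pythagorean-brief}) and non-negativity of $D_{\RKL}(\pi_{t+1}, \pi_t)$. To deduce the marginal chain, I would observe that for even $t \geq t_0$ the distribution $\pi_t$ lies in $S_2$, so its $y|x$-conditional matches that of $\PES$ and the chain rule for $\KL$ collapses $D_{\KL}(\pi_t, \PES)$ to $D_{\KL}(\mu_t, \mu_{ES})$; symmetrically, for odd $t$ we have $D_{\KL}(\pi_t, \PES) = D_{\KL}(\nu_t, \nu_{ES})$. Applying (\ref{eq-pythagorean-brief}) at the half-steps $t \to t+1$ and $t+1 \to t+2$ then interleaves $\mu$- and $\nu$-divergences at the joint level, while the preserved-marginal identities $\nu_{t+1} = \nu_t$ and $\mu_{t+2} = \mu_{t+1}$, together with data processing under the kernels $P[y|x]$ and $P[x|y]$, align the remaining terms into the stated chain.

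The main anticipated obstacle is isolating equality, rather than the usual inequality, in the Pythagorean relation: generic Bregman projections onto convex sets give only the one-sided bound $B_{H^*}(\ell_{ES}, \ell_t) \geq B_{H^*}(\ell_{ES}, \ell_{t+1}) + B_{H^*}(\ell_{t+1}, \ell_t)$, so the argument hinges on verifying that $\log \tilde{S}_1$ and $\log \tilde{S}_2$ are affine in the sense required by the Bregman Pythagorean theorem. If that verification turns out to be delicate at the level of the induced geometry, a direct chain-rule decomposition of $D_{\KL}(\pi_t, \PES)$ and $D_{\KL}(\pi_t, \pi_{t+1})$ — exploiting that $\pi_{t+1}$ shares its $\Y$-marginal with $\pi_t$ and its $x|y$-conditional with $\PES$ — offers a self-contained back-up that reaches the same identity term by term.
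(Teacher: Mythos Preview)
Your proposal is correct and follows essentially the same route as the paper: pass to log-likelihoods via Theorem~\ref{thm-alt-proj}, invoke the Pythagorean equality of Proposition~\ref{prop-pythagorean-generic} using the affineness of $\log \tilde S_1$ and $\log \tilde S_2$ (which the paper isolates as Lemma~\ref{lem-doubly-autoparallel}), and then read off monotonicity and the marginal chain. Your derivation of the marginal inequalities via the chain rule and the preserved-marginal identities is in fact more explicit than the paper's one-line appeal to ``positivity of the Bregman divergence''; the only slip is a harmless parity/labeling swap (membership in $S_2$ fixes the $x|y$-conditional, collapsing the joint $\KL$ to the $\nu$-marginal rather than the $\mu$-marginal), which does not affect the argument.
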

The first expression is a consequence of the chain rule for the $\KL$-divergence and yields the second by positivity, but the second expression can also be derived directly by the data processing inequality \cite{Polyanskiy_Wu_2024}. While succinct, the purely information theoretic justification sheds no light on the variational structure in Theorem~\ref{thm-alt-proj} underlying the Markov chain, nor does it suggest an analogy with the Sinkhorn algorithm.

The above theorem states that the quantity $D_{\RKL}(\pi_{k + 1}, \pi_{k})$ precisely constitutes the progress made by the alternating half-step dynamic in entropic convergence towards the generalised Edwards-Sokal coupling.

\begin{remark}[Entropy transfer]\label{remark-entropy-transfer}
    By Theorem~\ref{thm-duality}, beyond the burn-in time $t_0$, the entropy decay of the primal and dual Markov chains are out of phase by at most two half-steps, or one full Markov chain step. This allows one to transfer known statements about the entropy decay of the primal chain to the dual chain (see \cite{blanca2021entropydecayswendsenwangdynamics}, \cite{ullrich2012rapid}). 
\end{remark}

\section{The geometric proof}\label{sec-geometric-proof}
\subsection{Bregman divergences}

Bregman divergences represent the duality gap in the first-order characterization of their convex configuration function and therefore inherit favorable convex-analytic properties. In this subsection, we follow the presentation of Bregman divergences in Chapter 11 of \cite{cesa2006prediction}.
Considerations of alternating projection schemes with respect to Bregman divergences can be found in \cite{BREGMAN1967200} and \cite{bauschke2000dykstras}.

\begin{definition}[Legendre function (\cite{cesa2006prediction}, Section 11.2)]\    Given a nonempty subset $\mathcal{A} \subseteq \R^d$, a function $f: \mathcal{A} \to \R$ is said to be Legendre if $\mathcal{A}$ has convex interior, $f \in C^1(\textrm{int}(\mathcal{A}))$ and is strictly convex over the entirety of $\mathcal{A}$, and for a sequence $x_n \in \textrm{int}(\mathcal{A})$ with $x_n \to x \in \mathcal{A} \setminus \textrm{int}(\mathcal{A})$ we have that $||\nabla f(x_n) ||_2 \to \infty$. 
\end{definition}

\begin{definition}[Bregman divergence (\cite{cesa2006prediction}, Section 11.2)] \label{def-bregman-div}    
    Given a Legendre function $f$, the induced Bregman divergence $B_f: \mathcal{A} \times \textrm{int}(\mathcal{A}) \to \R$ is given by 
    \begin{equation}
        B_f(p, q) := f(p) - f(q) - \langle \nabla f(q), p - q \rangle
    \end{equation}
\end{definition}

Bregman divergences satisfy a number of useful properties.

\begin{lemma}[Properties of Bregman divergences (\cite{cesa2006prediction}, Section 11.2)] \label{lem-bregman-properties}
The following properties hold:
    \begin{enumerate}
        \item Non-negativity property: $B_f(p, q) \geq 0$ for all relevant $p, q$.
        \item Positivity property: $B_f(p, q) = 0 \iff p = q$.
        \item Reverse property: the Fenchel dual $f^*$ of a Legendre function $f$ is again Legendre, and the dual Bregman divergence $B_{f^*}$ satisfies $B_{f}(p, q) = B_{f^*}(\nabla f(q), \nabla f(p))$. 
    \end{enumerate}
\end{lemma}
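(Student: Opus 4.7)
The plan is to prove the three properties in the order listed, since each relies on successively stronger structural features of the Legendre function $f$. For non-negativity, I would simply invoke the fact that convexity of $f$ together with differentiability on $\textrm{int}(\mathcal{A})$ yields the supporting hyperplane inequality $f(p) \geq f(q) + \langle \nabla f(q), p - q\rangle$ for every $p \in \mathcal{A}$ and $q \in \textrm{int}(\mathcal{A})$, which is exactly $B_f(p,q) \geq 0$. For positivity, the direction $p = q \Rightarrow B_f(p,q) = 0$ is immediate from Definition~\ref{def-bregman-div}; for the converse, I would restrict $f$ to the line segment joining $p$ and $q$, parametrize $g(t) := f(q + t(p-q))$ on $[0,1]$, and use strict convexity to rule out $g(1) = g(0) + g'(0)$ unless $p = q$.

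The reverse property is the substantive step. My plan is to first set up the Legendre duality machinery: the boundary-gradient blow-up in the definition of Legendre, combined with strict convexity, is exactly what guarantees that $\nabla f$ is a bijection from $\textrm{int}(\mathcal{A})$ onto $\textrm{int}(\textrm{dom}(f^*))$ with inverse $\nabla f^*$, and that $f^{**} = f$ on $\mathcal{A}$. Once these facts are in hand, I would verify $B_f(p,q) = B_{f^*}(\nabla f(q), \nabla f(p))$ by a direct expansion. Writing $\nabla f(q) = y$ and $\nabla f(p) = z$, the Fenchel--Young equality gives $f^*(y) = \langle q, y\rangle - f(q)$ and $f^*(z) = \langle p, z\rangle - f(p)$, while $\nabla f^*(z) = p$. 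Substituting into the definition of $B_{f^*}(y,z)$, the $\langle p, z\rangle$ terms cancel and what remains collapses to $f(p) - f(q) - \langle y, p - q\rangle = B_f(p,q)$.

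The main obstacle I expect is not the final algebraic identity, which is essentially bookkeeping, but rather the verification that $f^*$ is itself Legendre. Strict convexity of $f^*$ on its domain follows cleanly from the bijection between gradients, but the boundary-gradient blow-up condition for $f^*$ requires a limiting argument: one has to show that as one approaches $\partial(\textrm{dom}(f^*))$, the preimages under $\nabla f^*$ must escape every compact subset of $\textrm{int}(\mathcal{A})$, so that $\|\nabla f^*\|_2 \to \infty$. Since the lemma is imported essentially verbatim from Chapter~11 of \cite{cesa2006prediction}, I would organize the write-up to cite this self-duality of the Legendre property as a black box and spend the active proof effort on the algebraic identity in property (3), which is what downstream results such as Theorem~\ref{thm-alt-proj} and Theorem~\ref{thm-duality} actually consume.
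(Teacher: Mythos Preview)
Your proposal is correct, and in fact goes well beyond what the paper does: the paper offers no proof of this lemma at all, treating it as an imported black box from Section~11.2 of \cite{cesa2006prediction}. Your outline---supporting hyperplane for non-negativity, strict convexity on the segment for positivity, and the Fenchel--Young substitution $f^*(\nabla f(p)) = \langle p, \nabla f(p)\rangle - f(p)$ together with $\nabla f^* \circ \nabla f = \mathrm{id}$ for the reverse property---is the standard argument and would constitute a genuine proof where the paper has none. Your instinct to quarantine the ``$f^*$ is Legendre'' verification as a citation is also well-judged, since that is the only part requiring real convex-analytic care (essential smoothness and essential strict convexity dualize under conjugation, which is Rockafellar's Theorem~26.5), while the algebraic identity in property~(3) is indeed the piece the paper actually consumes downstream.
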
 

The Bregman projection of a point $q$ onto a convex set $S$ is the point $p$ in $S$ minimizing $B_f(p,q)$, which is uniquely defined by the strict convexity of $f.$ Bregman divergences involving Bregman projections admit a Pythagorean relation in analogy with the classical result for the $\KL$ divergence.

\begin{proposition}[Pythagorean Relation (\cite{cesa2006prediction}, Lemma 11.3 )]\label{prop-pythagorean-generic}
Given a Legendre function $f$, define the information/Bregman projection with respect to $B_f$ from $q \in \textrm{int}(\mathcal{A})$ onto $S \cap \mathcal{A} \neq \emptyset$ with $S$ closed and convex by
\begin{equation*}
    p' := \arg \min_{p \in S \cap \mathcal{A}} B_f(p, q)
\end{equation*}
We then have that, for all $p \in S,$
\begin{equation*}
    B_f(p, q) \geq B_f(p, p') + B_f(p', q)
\end{equation*}
with equality when $S$ is affine.
\end{proposition}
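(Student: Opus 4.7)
The plan is to reduce the three-point inequality to a single first-order optimality condition for the Bregman projection $p'$, mediated by one key algebraic identity.

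First, I would establish the identity
\begin{equation*}
B_f(p, q) - B_f(p, p') - B_f(p', q) = \langle \nabla f(p') - \nabla f(q),\, p - p' \rangle.
\end{equation*}
This follows by expanding each of the three Bregman divergences using Definition~\ref{def-bregman-div}: all of the $f(p)$, $f(p')$, and $f(q)$ terms cancel pairwise, and the remaining inner products telescope to the right-hand side. This is a short but essential step, since it converts the Pythagorean statement about divergences into a statement about a single inner product evaluated at $p'$.

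Next, I would invoke first-order optimality of $p'$ as a minimizer of the convex function $p \mapsto B_f(p, q)$ over the convex set $S \cap \mathcal{A}$. Since $\nabla_p B_f(p, q) = \nabla f(p) - \nabla f(q)$, the standard variational inequality for convex constrained minimization yields $\langle \nabla f(p') - \nabla f(q),\, p - p' \rangle \geq 0$ for all $p \in S \cap \mathcal{A}$, which combined with the identity above gives the desired inequality. For the equality case when $S$ is affine, I would note that for any $p \in S$ the reflected point $2p' - p$ also lies in $S$, so applying the variational inequality to both $p$ and $2p'-p$ forces the inner product to vanish.

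The main obstacle I anticipate is ensuring that $p' \in \textrm{int}(\mathcal{A})$, so that $\nabla f(p')$ is defined and the first-order condition genuinely applies. This is precisely what the Legendre hypothesis is designed to handle: the blow-up $\|\nabla f(x_n)\|_2 \to \infty$ as $x_n$ approaches $\partial \mathcal{A}$ forces $B_f(\cdot, q)$ to behave coercively near the boundary (using $q \in \textrm{int}(\mathcal{A})$), so that any minimizing sequence stays bounded away from $\partial \mathcal{A}$; strict convexity of $f$ over $\mathcal{A}$ then yields existence and uniqueness of an interior minimizer. These technicalities are standard and I would cite \cite{cesa2006prediction} rather than reproducing them.
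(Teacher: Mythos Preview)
The paper does not give its own proof of this proposition; it is quoted directly from \cite{cesa2006prediction}, Lemma~11.3, and used as a black box. Your argument is the standard one (and is essentially the proof in \cite{cesa2006prediction}): the three-point identity
\[
B_f(p,q) - B_f(p,p') - B_f(p',q) = \langle \nabla f(p') - \nabla f(q),\, p - p' \rangle
\]
reduces everything to the first-order optimality condition for the constrained convex problem, and the Legendre property is what forces $p' \in \textrm{int}(\mathcal{A})$ so that $\nabla f(p')$ makes sense. This is correct.

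One small imprecision in your treatment of the affine case: applying the variational inequality to the reflected point $2p'-p$ requires $2p'-p \in S \cap \mathcal{A}$, and while affinity of $S$ guarantees $2p'-p \in S$, it need not lie in $\mathcal{A}$. The clean fix is local: since $p' \in \textrm{int}(\mathcal{A})$ and $S$ is affine, for small $t>0$ both $p' + t(p-p')$ and $p' - t(p-p')$ lie in $S \cap \mathcal{A}$, so the directional derivative of $B_f(\cdot,q)$ at $p'$ in the direction $p-p'$ is nonnegative in both signs, hence zero, giving $\langle \nabla f(p') - \nabla f(q),\, p - p' \rangle = 0$ and equality in the Pythagorean relation.
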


The Kullback-Leibler (KL) is the relevant example of a Bregman divergence for this paper, and the definition of the reverse $\KL$ divergence (RKL) follows immediately. Note that the $\RKL$ divergence is not itself Bregman \cite{jiao2014information}.
\begin{example}[KL and RKL divergences]\label{ex-kl}
    Choosing $f(x) = x \ln x$, the Bregman divergence $D_{\KL} := D_f$ is the KL divergence. One then defines the RKL divergence as $D_{\RKL}(p,q) := D_{\KL} (q,p)$. When $P\in \clPO$ is mutually absolutely continuous with respect to $Q \in \clPO$, they satisfy $D_f(P,Q) = D_g(Q,P)$.
\end{example}

The following decomposition of the $\KL$ divergence between probability measures on finite product spaces is an easy consequence of disintegration of measure.

\begin{proposition}[Chain rule for KL divergence \cite{Polyanskiy_Wu_2024}]\label{prop-chain-kl}
    Given $P, Q \in \clPXY$ with $P$ absolutely continuous with respect to $Q$, we have that 
    \begin{align*}
        D_{\KL}(P,Q) &= D_{\KL}(P_{\X},Q_{\X}) + D_{\KL}(P[y|x],Q[y|x]|P_{\X}) \\
        &:= D_{\KL}(P_{\X},Q_{\X}) + \int_{\X} D_{\KL}(P[y|x],Q[y|x])P_{\X}(dx)
    \end{align*}
\end{proposition}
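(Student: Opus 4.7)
The plan is to expand $D_{\KL}(P,Q)$ as the $P$-integral of the log-Radon--Nikodym derivative and then factor that derivative through the disintegration of both measures.

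First, I would disintegrate $P(dx,dy) = P_\X(dx)\, P[dy\mid x]$ and $Q(dx,dy) = Q_\X(dx)\, Q[dy\mid x]$ (immediate on the finite product $\XY$, or, more generally, by the disintegration theorem on Polish spaces). The hypothesis $P \ll Q$ yields $P_\X \ll Q_\X$ by push-forward and, for $P_\X$-almost every $x$, $P[\cdot\mid x] \ll Q[\cdot\mid x]$.

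Next, I would verify the multiplicative factorization of the Radon--Nikodym derivative,
$$\frac{dP}{dQ}(x,y) \;=\; \frac{dP_\X}{dQ_\X}(x)\cdot \frac{dP[\cdot\mid x]}{dQ[\cdot\mid x]}(y),$$
by checking that the right-hand side reproduces $P(A\times B)$ when integrated against $Q$ over $A\times B$; this is an application of Fubini combined with the defining properties of the marginal and conditional Radon--Nikodym derivatives. Taking logarithms and integrating against $P$ then splits $D_{\KL}(P,Q)$ into two pieces. The first piece, $\int \log\tfrac{dP_\X}{dQ_\X}(x)\, P(dx,dy)$, collapses under the inner $y$-integration (since each $P[\cdot\mid x]$ is a probability measure) to $D_{\KL}(P_\X, Q_\X)$. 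The second piece, after the inner $y$-integral with $x$ fixed, yields $D_{\KL}(P[y\mid x], Q[y\mid x])$, and a further integration against $P_\X$ assembles exactly the conditional divergence $D_{\KL}(P[y\mid x], Q[y\mid x]\mid P_\X)$ as defined in the statement.

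The main (essentially bookkeeping) obstacle is the measure-theoretic hygiene: justifying the disintegration, handling the $P_\X$-null set on which $Q[\cdot\mid x]$ may fail to dominate $P[\cdot\mid x]$, and confirming that the factored expression is a bona fide version of $dP/dQ$. In the finite-support setting of $\PT$ that we use throughout the paper, these points are trivial: every marginal and conditional has full support, so the quotients are well-defined pointwise and the argument reduces to a direct rearrangement of sums of logarithms of products.
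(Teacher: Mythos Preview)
Your proposal is correct and follows the standard route. The paper does not actually prove this proposition: it is stated with a citation to Polyanskiy--Wu, and the preceding sentence merely remarks that it is ``an easy consequence of disintegration of measure'' --- which is precisely the approach you outline.
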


\subsection{Entropy, denormalization and log-denormalization}
\label{sec-structure}

While it is possible prove our characterization in Theorem~\ref{thm-alt-proj} without appealing to the language of Bregman divergences, the application of Proposition~\ref{prop-pythagorean-generic} to obtain Theorem~\ref{thm-duality} requires that we work with a Bregman divergence.
The  fact that the reverse KL divergence is not a Bregman divergence over the simplex constitutes the main obstacle to our analysis.
We overcome it by re-parametrizing the space of measures over $\T$ in terms of their log-likelihoods, which we call the \textit{log-denormalization} of $\PT$, following terminology from information geometry~\cite{ohara2017doublyautoparallelstructureprobability}.

In this section, we will formally define this re-parametrization as originating from the choice of the entropy as our Bregman configuration function.
We conclude the section by proving our main structural lemma: the inverse images of the sets $S_1$ and $S_2$ under our re-parametrizations are affine sets, for which Proposition~\ref{prop-pythagorean-generic} applies with equality.

As is standard in online learning~\cite{cesa2006prediction} and information geometry~\cite{ohara2017doublyautoparallelstructureprobability} when working with the Bregman divergence of the entropy function, the requirement that the entropy $H$ be a Legendre function forces us to define it over the space of measures $\R_+^{\T}$ over $\T$ rather than the simplex $\PT$. In this context, the set $\R_+^{\T}$ is called the \textit{denormalization} of the open simplex $\PT:$
$$\forall \pi \in \R_+^{\T}, \qquad
H(\pi) := \sum_{\tau \in \T} \pi(\tau) \log \pi(\tau) - \pi(\tau) .
$$
For $\pi, \rho \in \R^{\T}_+$, the corresponding Bregman divergence takes the form:
$$
B_H(\pi, \rho) = \sum_{\tau \in \T} \pi(\tau) \log \frac{\pi(\tau)}{\rho(\tau)} - \pi(\tau) + \rho(\tau).
$$
The following proposition shows that this Bregman divergence can be decomposed into a KL-divergence and a non-negative term.
\begin{proposition}\label{prop-deform-KL}
Let $h:\R_+ \to \R$ be the scalar entropy function, 
i.e.\ $h(x) = x \log x -x.$ 
Then, for  all measures $\pi, \rho \in \R^{\T}_+$:
$$
B_H(\pi, \rho) = B_h\left(\pi(\T),\rho(\T))\right) + \pi(\T) \cdot D_{\KL}\left(\frac{\pi}{\pi(\T)}, \frac{\rho}{\rho(\T)}\right)
$$
\end{proposition}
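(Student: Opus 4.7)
The plan is to carry out a direct algebraic computation, factoring each measure into its total mass and normalized counterpart. Write $a := \pi(\T)$, $b := \rho(\T)$, and let $\bar\pi := \pi/a$, $\bar\rho := \rho/b$ be the induced probability measures in $\PT$. Substituting $\pi(\tau) = a \bar\pi(\tau)$ and $\rho(\tau) = b \bar\rho(\tau)$ into the explicit formula
$$
B_H(\pi,\rho) = \sum_{\tau \in \T} \pi(\tau)\log\frac{\pi(\tau)}{\rho(\tau)} - \pi(\tau) + \rho(\tau),
$$
the log-ratio splits as $\log(a/b) + \log(\bar\pi(\tau)/\bar\rho(\tau))$ via the multiplicative property of the logarithm.

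Next, I would collect terms. The contribution coming from $\log(a/b)$, after pulling the constant out of the sum and using $\sum_\tau \bar\pi(\tau) = 1$, yields $a \log(a/b)$; the contribution from the log-ratio of normalized densities yields exactly $a \cdot D_{\KL}(\bar\pi,\bar\rho)$; and the sum of the linear terms $-\pi(\tau) + \rho(\tau)$ telescopes to $-a + b$. Therefore
$$
B_H(\pi,\rho) = \left( a\log\frac{a}{b} - a + b \right) + a \cdot D_{\KL}(\bar\pi, \bar\rho).
$$

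Finally, I would recognize the first parenthesized term as $B_h(a,b)$. This is an immediate verification from Definition~\ref{def-bregman-div} applied to the scalar Legendre function $h(x) = x\log x - x$, since $h'(x) = \log x$ gives
$$
B_h(a,b) = (a\log a - a) - (b \log b - b) - \log b \cdot (a-b) = a\log(a/b) - a + b.
$$
Combining the two displays yields the claimed identity.

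There is really no serious obstacle here: the statement is a bookkeeping identity, and the only subtlety is choosing the right parametrization up front (total mass times normalized density) so that the log factors cleanly into a ``scale'' part and a ``shape'' part. This separation is precisely what makes the entropy function useful as a Bregman configuration on the denormalized cone $\R_+^{\T}$, and it is what will ultimately let us transfer Pythagorean-type identities from $B_H$ down to $D_{\KL}$ on the simplex.
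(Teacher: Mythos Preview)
Your proof is correct and follows essentially the same approach as the paper: both carry out a direct computation that splits the log-ratio into a scale part $\log(\pi(\T)/\rho(\T))$ and a shape part $\log(\bar\pi/\bar\rho)$, then identify the resulting pieces as $B_h$ and $D_{\KL}$ respectively. Your notation $(a,b,\bar\pi,\bar\rho)$ is slightly cleaner, but the argument is the same line-for-line.
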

\begin{proof}
    We have:
    \begin{align*}
        B_H(\pi, \rho) =&  \left(\sum_{\tau \in \T} \pi(\tau) \log \frac{\pi(\tau)}{\rho(\tau)}\right) - \pi(\T) + \rho(\T)\\
        = & \;\pi(\T) \cdot \left(  \sum_{\tau \in \T} \frac{\pi(\tau)}{\pi(\T)} \cdot \left(\log \frac{\nicefrac{\pi(\tau)}{\pi(\T)}}{\nicefrac{\rho(\tau)}{\rho(\T)}}\right)+ \log\frac{\pi(\T)}{\rho(\T)}\right) - \pi(\T) + \rho(\T)\\
        =& \; \pi(\T) \cdot \log \frac{\pi(\T)}{\rho(\T)} - \pi(\T) + \rho(\T) + \pi(\T) \cdot \left(  \sum_{\tau \in \T} \frac{\pi(\tau)}{\pi(\T)} \cdot \log \frac{\nicefrac{\pi(\tau)}{\pi(\T)}}{\nicefrac{\rho(\tau)}{\rho(\T)}}\right)\\
        = & \;B_h\left(\pi(\T),\rho(\T))\right) + \pi(\T) \cdot D_{\KL}\left(\frac{\pi}{\pi(\T)}, \frac{\rho}{\rho(\T)}\right)
    \end{align*}
\end{proof}

When $\pi$ and $\rho$ are normalized to belong to the simplex, i.e.\ $\pi, \rho \in \PT$, the first term in the previous equation cancels, to yield:
$$
B_H(\pi, \rho) = D_{\KL}(\pi,\rho)
$$

\subsection{Fenchel Duality and Log-Denormalization}
A key ingredient of our arguments will be a connection between the reverse KL divergence and the dual Bregman divergence $B_{H^*}$. For this reason, we need to introduce in more detail the latter divergence. To start, we notice that the gradient of the entropy $\nabla H$ is simply the coordinate-wise logarithm, 
while its inverse $\nabla H^*$ is the coordinate-wise exponential. We denote them as follows:
$$
\nabla H(\pi) = \log \pi \textrm{ and } \nabla H^*(\ell) = \exp \ell.
$$
This implies that the domain of the Fenchel dual $H^*$ is the total space $\nabla H (\R^{\T}_+) = \log \R^{\T}_+ = \R^{\T}$, which we will refer to as the \textit{log-denormalization} of the simplex $\PT.$ With this construction, Lemma~\ref{lem-bregman-properties} then implies that, for all $\pi, \rho \in \PT$:
$$
D_{\RKL}(\pi, \rho) = D_{\KL}(\rho, \pi) = B_{H}(\rho, \pi) = B_{H^*}(\log \pi, \log \rho).
$$
In other words, the reverse KL divergence can be seen as a Bregman divergence when we parametrize the space of probabilities in terms of log likelihoods. This is the main tool we will leverage in overcoming the technical obstacle described at the beginning of this section.

As we want to work in the log-denormalization, it will be useful to establish certain properties of the logarithmic denormalizations defined in Definition~\ref{def-denormalization}.

\begin{lemma}
\label{lem-doubly-autoparallel}
 The logarithmic denormalizations $\log \wt S_1$ and $\log \wt S_2$ of $S_1$ and $S_2$ defined in Definition~\ref{def-denormalization} are affine subspaces of $\R^{\T}.$
\end{lemma}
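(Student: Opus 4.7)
The plan is to unpack the definition of $\tilde{S}_i$ and then show that, after taking coordinate-wise logarithms, each set becomes a translate of a coordinate subspace in $\mathbb{R}^{\mathcal{T}}$.

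First I would fix $i=1$ and rewrite the defining property of $S_1$ pointwise on $\mathcal{T}$. The disintegration in Equation~\ref{eq-disint-generic-x} says exactly that, for $\pi \in S_1$, we have $\pi(x,y) = P[x \mid y]\,\pi_{\mathcal{Y}}(y)$ for every $(x,y)\in \mathcal{T}$. Scaling by $\tau \in \mathbb{R}_+$ preserves this relation: if $\mu = \tau\pi \in \tilde{S}_1$, then $\mu(x,y) = P[x\mid y]\,\mu_{\mathcal{Y}}(y)$ on $\mathcal{T}$, and the converse holds by normalization (dividing by $\mu(\mathcal{T})$ recovers an element of $S_1$, because $\mathcal{T}_{\mathcal{Y}}$ is exactly the support of $\mu_{\mathcal{Y}}$ whenever $\mu$ is strictly positive on $\mathcal{T}$). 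Thus $\tilde{S}_1$ is characterized as the set of strictly positive measures on $\mathcal{T}$ satisfying the $x\mid y$ disintegration.

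Next I would take logarithms, using that $P[x\mid y] > 0$ on $\mathcal{T}$ by the definition of $\mathcal{T} = \operatorname{supp}(\PES)$. For $\mu \in \tilde{S}_1$ with $\ell = \log\mu$, we obtain
\begin{equation*}
    \ell(x,y) \;=\; \log P[x\mid y] \;+\; \log \mu_{\mathcal{Y}}(y) \;=\; c_1(x,y) + h(y),
\end{equation*}
where $c_1(x,y) := \log P[x\mid y]$ is a fixed element of $\mathbb{R}^{\mathcal{T}}$ and $h(y) := \log\mu_{\mathcal{Y}}(y)$ depends only on $y$. Conversely, given any function $h:\mathcal{T}_{\mathcal{Y}}\to \mathbb{R}$, the measure $\mu(x,y) := P[x\mid y]\,e^{h(y)}$ belongs to $\tilde{S}_1$ and has $\log\mu = c_1 + h$. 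Hence
\begin{equation*}
    \log \tilde{S}_1 \;=\; c_1 + V_{\mathcal{Y}}, \qquad V_{\mathcal{Y}} := \{\ell \in \mathbb{R}^{\mathcal{T}} : \ell(x,y)\ \text{depends only on}\ y\},
\end{equation*}
which is an affine subspace of $\mathbb{R}^{\mathcal{T}}$, since $V_{\mathcal{Y}}$ is clearly a linear subspace (it is the pullback of $\mathbb{R}^{\mathcal{T}_{\mathcal{Y}}}$ along the projection $\mathcal{T}\to\mathcal{Y}$). The argument for $\log\tilde{S}_2$ is identical with the roles of $\mathcal{X}$ and $\mathcal{Y}$ swapped: $\log\tilde{S}_2 = c_2 + V_{\mathcal{X}}$ where $c_2(x,y) = \log P[y\mid x]$ and $V_{\mathcal{X}}$ is the subspace of functions on $\mathcal{T}$ depending only on $x$.

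There is no real obstacle here; the only subtlety is being careful about $\mathcal{T}$ versus $\mathcal{X}\times\mathcal{Y}$, so that the conditionals $P[\cdot\mid\cdot]$ are strictly positive where we take their logarithm, and so that the $y$-marginal of an element of $\tilde{S}_1$ is supported exactly on $\mathcal{T}_{\mathcal{Y}}$ (which is where we need $h$ to be finite). Both facts follow from the definition of $\mathcal{T}$ as $\operatorname{supp}(\PES)$ together with the ergodicity-based pruning discussed after Definition~\ref{def-general-edwards-sokal}.
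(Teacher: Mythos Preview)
Your proof is correct and follows essentially the same route as the paper: both arguments observe that elements of $\tilde S_i$ factor as a fixed conditional times an arbitrary positive function of one coordinate, so that after taking coordinate-wise logarithms one obtains a fixed vector plus the linear subspace of functions depending only on that coordinate. The only discrepancy is a harmless labeling swap---you read Equation~\ref{eq-disint-generic-x} as fixing the $x\mid y$ conditional and hence write $\log\tilde S_1 = c_1 + V_{\mathcal Y}$, whereas the paper's own proof treats $S_1$ as the set with fixed $y\mid x$ conditional and obtains $\log\tilde S_1$ as a translate of the subspace of functions depending only on $x$; since the lemma is symmetric in the two sets this does not affect the argument.
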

\begin{proof}
We carry out the proof for $S_1$ from which $S_2$ follows analogously.
Consider the following probability distributions $\{\mu_x\}_{x \in \X}$ over the simplex $\PT$:
  \begin{align*}
        \mu_x = \sum_{y\in \Y : (x,y) \in \T} P[y| x] \cdot e_{(x,y)},
    \end{align*}
where $e_{(x,y)}$ is the standard unit vector corresponding to the Dirac mass on $(x,y)$ in $\PT$.
This is the set of probability distributions with Dirac $\X$-marginal and appropriate $\Y$ conditional. We have:
    \begin{align}\label{eq-linear-rep}
        S_1 = \conv(\{\mu_x\}_{x \in \X}).
    \end{align}
    where $\conv$ denotes the convex hull within $\R^{\T}$. 
    It then follows that the denormalization has the form
    \begin{align*}
        \wt S_1 = \sum_{x \in \X} t_x \mu_x 
    \end{align*}
    with $t_x > 0$ for all $x \in \X$, i.e.\ it is the open cone generated by the distributions $\{\mu_x\}_{x \in \X}.$ Noting that each $\mu_x$ has disjoint nonzero sets of coordinates, the log denormalization has the form
    \begin{align*}
        \log \wt S_1 = \sum_{x \in \X} \log(t_x) \cdot \bigg( \sum_{y \in \Y} e_{(x,y)} \bigg) + 
        \sum_{(x,y) \in \T} \log P[y| x] \cdot e_{(x,y)}
    \end{align*}
    Letting $c_x = \log (t_x)$ for all $x \in \X$, we have $c \in \R^{\X}$ and as $c_x$ varies, this clearly describes an affine subset of $\R^{\T}$.
\end{proof}

\subsection{Geometry and duality}\label{sec-geo-duality}
As previewed in the previous section, we will make use of the log-denormalization, which originates from the gradient of the entropy $H$. With this in mind, we introduce a specific notation for the log-likelihood of the distribution of the product chain $\pi_t$. For all $t \geq t_0,$ we let
$$
\ell_t := \nabla H(\pi_t) = \log \pi_t.
$$
These exist because, after the burn-in time, we are guaranteed that $\pi_t$ has full support over $\T$. With this notation in mind, we are ready to prove Theorem~\ref{thm-alt-proj}.

\begin{proof}[Proof of Theorem~\ref{thm-alt-proj}]
By the definition of burn-in time $t_0$, we have that $\pi_t \in \PT$ and the log-denormalizations $\ell_t$ are well-defined.  
By the convexity of the sets $\log \wt S_1$ and $\log \wt S_2$ and the strict convexity of $B_H^*(\cdot, \ell_t)$, we immediately deduce that the minimizers in the problems above exist and are unique, justifying the 
$\arg \min$ notation.

We start proving the claim for the projection of the log-denormalization over $\log \wt S_1$ and $t$ even. The claim for $\log \wt S_2$ and $t$ odd follows in the same way.
For any $\ell \in \log \wt S_1$, we apply Lemma~\ref{prop-deform-KL} to obtain:
\begin{align*}
B_{H^*}(\ell, \ell_t) =  B_H(\pi_t, \exp \ell)
= B_h\left( 1 ,\sum_{\tau \in \T} \exp \ell (\tau)\right) + \pi_t({\T}) \cdot D_{\KL}\left(\pi_t, \frac{\exp \ell}{\sum_{\tau \in \T} \exp \ell(\tau)}\right)
\end{align*}
Because $\ell \in \log \wt S_1$, we have  that $\frac{\exp \ell}{\sum_{\tau \in \T} \exp \ell(\tau)} \in S_1$, so that it has $y|x$-conditionals equal to $P[y|x]$ for all $(x,y) \in \T.$
We use this fact when applying the chain rule for the $\KL$-divergence: 
\begin{align*}
B_{H^*}(\ell, \ell_t) =&  \,B_h\left( 1 ,\sum_{\tau \in \T} \exp \ell (\tau)\right)\\ + & \,\pi_t({\T}) \cdot D_{\KL}\left((\pi_t)_{\X}, \left(\frac{\exp \ell}{\sum_{\tau \in \T} \exp \ell(\tau)}\right)_{\X} \right)\\ + & \,\pi_t({\T}) \cdot \sum_{x \in \X} D_{\KL}(\pi_t(y|x), P[y|x]) \cdot \pi_t(x).
\end{align*}
Finally, consider minimizing this expression over the choice of $\ell \in \log \wt S_1$. The third term is fixed and independent of $\ell.$ The first and second term can be set to their minimum of $0$, by choosing:
$$
\sum_{\tau \in \T} \exp \ell(\tau) =1 \textrm{ and } \left(\frac{\exp \ell}{\sum_{\tau \in \T} \exp \ell(\tau)}\right)_{\X} = (\pi_t)_{\X}
$$
This means $\frac{\exp \ell}{\sum_{\tau \in \T} \exp \ell(\tau)}$ is the unique probability distribution, with $\X$-marginals equal to those $\pi_t$ and $y|x$-conditionals equal to $P[y|x]$. But this is exactly $\pi_{t+1}.$ Hence, it must be the case that the minimizer $\ell$ equals $\ell_{t+1}$ as required.

To complete the statement, we prove the corresponding statement regarding $\pi_{t+1}$ as the projection over $S_1.$ The statement for $S_2$ follows in the same way.
The proof is entirely analogous to the one for the log-denormalization. For any $\pi \in \PT$ with $\pi \in S_1,$ we have, by the chain rule for the \KL-divergence:
$$
D_{\RKL}(\pi, \pi_t) = D_{KL}(\pi_t, \pi) = D_{\KL}((\pi_t)_\X, (\pi)_{\X}) + \sum_{x \in \X} D_{\KL}(\pi_t(y|x), P[y|x]) \cdot \pi_t(x).
$$
When minimizing over $\pi \in S_1,$ we obtain that the $\pi$ must be the unique distribution with $\X$-marginals equal to $(\pi_t)_{\X}$ and $y|x$ conditionals equal to $P[y|x]$. But this is exactly the distribution $\pi_{t+1}$, as required.
\end{proof}

Having established Theorem~\ref{thm-alt-proj} and Lemma~\ref{lem-doubly-autoparallel}, we are in a position to apply Proposition~\ref{prop-pythagorean-generic} to bound the entropy decay of the primal and dual Markov chains. 

\begin{proof}[Proof of Theorem~\ref{thm-duality}]
Since the sets $\log \wt S_1, \log \wt S_2$ have affine logarithmic denormalization, it follows from Theorem~\ref{thm-alt-proj} and Proposition~\ref{prop-pythagorean-generic} that 
    \begin{align}
    D_{\RKL}(\PES, \pi_{t}) = B_H(\pi_t, \PES) =
        B_{H^*} (\log \PES, \ell_t) = B_{H^*} (\log \PES, \ell_{t+1}) + B_{H^*}(\ell_{t+1}, \ell_t) =\\
        D_{\RKL}(\PES, \pi_{t + 1}) + D_{\RKL}(\pi_{t + 1}, \pi_{t})
    \end{align}
    The monotonicity of $(D_{\KL}(\pi_t, \pi_{ES}))_{t \geq t_0}$ follows then from the non-negativity of the last term in the equation above. The second expression in Theorem~\ref{thm-duality} then follows from positivity of the Bregman divergence.
\end{proof}

\bibliographystyle{plain}
\bibliography{paper}

\appendix

\end{document}